\newenvironment{proof}{\noindent\textbf{Proof\ }}{\hspace*{\fill}$\Box$\medskip}
\newtheorem{lemma}{Lemma}
\newtheorem{proposition}{Proposition}
\newtheorem{definition}{Definition}
\newtheorem{remark}{Remark}
\newtheorem{theorem}{Theorem}
\begin{document}

\title{Iwahori Spherical Whittaker Functions for Steinberg Representations} \author{Ehud Moshe Baruch and Markos Karameris}\maketitle
\begin{abstract}

 Let $G$ be a split reductive group over a $p$-adic field $F$ and let $(\pi_{St},V)$ be a Steinberg representation of $G$ with trivial central character. It is known that the space of Iwahori fixed vectors in $V$ is one-dimensional. The Iwahori Hecke algebra acts on this space via a character. We compute in full the Whittaker function associated to this vector using the Hecke algebra action. We use the explicit form of this Whittaker function to compute certain Rankin-Selberg integrals involving Steinberg representations in the case of $GL_n(F)$.
\end{abstract}

\section{Introduction and Notations}
Reeder \cite{reeder} considered certain Iwahori fixed vectors in induced representations and computed their Whittaker values on diagonal elements. He furthermore acknowledged the challenge of computing these values for a general element. In this paper we give a complete formula of an Iwahori fixed Whittaker function in a Steinberg representation. In \cite{Bump}, Bump, Brubaker, Buciumas, Henrik, and Gustaffson provide a formula for computing the values of Iwahori fixed vectors in induced representations. Our vector is a linear combination of such vectors, however it is not clear how to obtain our result from their general formulas. We explicitly derive the formula for a Whittaker function of an Iwahori fixed vector in the Steinberg representation giving all values of this function.

Let $F$ be a non-archimedean local field and $|.|$ the standard absolute value on $F$. We denote by $\mathfrak{p}$ the maximal ideal, generated by the uniformizer $\varpi$, $\mathcal{O}$ the ring of integers and $\mathfrak{f}=\mathcal{O}/\mathfrak{p}$ the residue field with $q$ elements. We also consider the connected, split reductive group of $F$-rational points $G=\mathcal{G}(F)$ where $\mathcal{G}$ is the corresponding group-valued functor. We furthermore let $\mathcal{Z}$ denote the center of $\mathcal{G}$ and $Z=\mathcal{Z}(F)$ the center of $G$. We also have the following subgroups inside $G$:
 the maximal compact subgroup $K=\mathcal{G}(\mathcal{O})$,
 the Borel subgroup $B$,
 the maximal unipotent $N$,
 the restriction $N_{\mathcal{O}}=N\cap K$,
 the maximal torus $T$,
 the Weyl group of permutations $\mathbf{W}=N_G(T)/T$ \footnote{For each $\mathbf{w}\in\mathbf{W}$ we pick a representative $w$ in $K\cap N_G(T)$.} generated by the simple reflections $\mathbf{s}_{\alpha}$ for the simple roots $\alpha$ and $\mathbf{w_0}$ the longest Weyl element and 
 the Iwahori subgroup $J$, which is the preimage of the Borel under the canonical projection $K\to\mathcal{G}(\mathfrak{f})$.
 Let $X^*(T)$ be the group of rational characters of $T$ and $X_*(T)$ the group of rational cocharacters $\lambda:F\to T$. The mapping $X_*(T)\to T/T^0$, where $T^0=T\cap K$, with $\lambda\to\varpi^{\lambda}$ and $$\varpi^{\lambda}=\lambda(\varpi)$$ induces an isomorphism $X_*(T)\cong T/T^0$. For each $d\in T/T^0$ we denote by $\lambda_d$ the corresponding cocharacter in $X_*(T)$. We have the usual perfect pairing $\langle.,.\rangle$ between the groups $X^*(T)$ and $X_*(T)$ which is defined by $\chi(\lambda(\varpi))=\varpi^{\langle\chi,\lambda\rangle}$ where $\chi\in X^*(T)$ and $\lambda\in X_*(T)$.
 Denote by $\Phi$ the associated root system with positive simple roots $\Delta$. We also let $x_{\alpha}(t)$ be the one-parameter subgroup corresponding to $\alpha\in\Phi$. Then $N=\prod\limits_{\alpha\in\Phi^+}x_{\alpha}(F)$ and, moreover, the elements $d\in T$ act on the roots by conjugation as \begin{equation}dx_{\alpha}(t)d^{-1}=x_{\alpha}(\alpha(d)t)=x_{\alpha}(st \varpi^{<\alpha, \lambda_d>} )\label{actroot}\end{equation}  where $s\in\mathcal{O}^{\times}$ and $\alpha(d)=\varpi^{\langle\alpha,\lambda_d\rangle}\mod\mathcal{O}^{\times}$. The Weyl element representatives can be chosen so that $wx_{\alpha}(t)w^{-1}=x_{w\alpha}(ct)$ where $c=\pm1$.
 \begin{remark}\label{rem1}
     It is easy to see that $\langle\alpha,\lambda_{wdw^{-1}}\rangle=\langle w^{-1}\alpha,\lambda_d \rangle$ by explicitly computing the expression $wdw^{-1}x_{\alpha}(t)wd^{-1}w^{-1}=wx_{w^{-1}\alpha}(c_1t\varpi^{\langle w^{-1}\alpha,\lambda_d\rangle})w^{-1}=x_{\alpha}(c_1c_2t\varpi^{\langle w^{-1}\alpha,\lambda_d\rangle})$ where $c_1,c_2\in\mathcal{O}^{\times}$.
 \end{remark}
 In addition to the above, let $\hat{G}$ be the Langlands dual of $G$ and $\hat{T}$ the dual torus. There is a natural isomorphism between $X_*(T)$ and $X^*(\hat{T})$ which identifies each $\lambda\in X_*(T)$ with an element in the coweight lattice $X^*(\hat{T})$. More explicitly, letting $\hat{T}=Hom_{\mathbb{Z}}(T/T^0,\mathbb{C})$, we can parametrize each element of $\hat{T}$ with a $(\mathbb{C}^{\times})^n$ tuple $z=(z_1,...,z_n)$ and the associated homorphism $\tau_z(t)=\prod\limits_{i=1}^nz_i^{\lambda_i}$ where $t\in\varpi^{\lambda}T^0$ and $\lambda$ is identified with $(\lambda_1,...,\lambda_n)\in\mathbb{Z}^n$ under the natural isomorphism $X_*(T)\cong \mathbb{Z}^n$. In this way we obtain a perfect pairing between $X^*(T)$ and $X^*(\hat{T})$.

\section{The Iwahori-Bruhat decomposition}

Let $\mathcal{H}(G,J)$ be the Hecke algebra of $J$ bi-invariant and compactly supported functions on $G$. $\mathcal{H}(G,J)$ acts on spaces of $J$ invariant vectors in a representation of $G$ and 
in particular on right $J$ invariant functions on $G$. We denote by
$X_g\in \mathcal{H}(G,J)$ the characteristic function of $JgJ$. Then the left action of the Hecke algebra on a $J$ invariant function $F:G\to\mathbbm{C}$ is given by $X_g(F)(x)=\int_{JgJ}F(xh)dh$ (see 1.2 of \cite{bor}) where $\mu=dh$ is a normalized Haar measure for which $\mu(J)=\int_Jdh=1$. 
In fact, we can compute this action explicitly:
\begin{lemma} \label{cosum1}
    For any $g\in G$ with coset decomposition $JgJ=\overset{m}{\underset{i=1}{\bigcup}}\gamma_iJ$ and $f$ a $J$ invariant function. Then $$X_{g}(f)(x)=\sum\limits_{i=1}^mf(x\gamma_i).$$ We note that $m=\mu(JgJ)=\#|J\backslash JgJ|$.
\end{lemma}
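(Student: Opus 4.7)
The plan is to unwind the definition of the Hecke algebra action directly, using the given coset decomposition and the right $J$-invariance of $f$. The measure normalization used implicitly in the definition $X_g(f)(x)=\int_{JgJ}f(xh)\,dh$ should be the standard one giving $J$ volume $1$; I would either make this explicit or absorb it into the final counting.

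First I would use the disjointness of the decomposition $JgJ=\bigsqcup_{i=1}^m \gamma_i J$ (one verifies the union is indeed disjoint from the double coset structure) to break the integral as
\[
X_g(f)(x)=\int_{JgJ}f(xh)\,dh=\sum_{i=1}^m \int_{\gamma_i J} f(xh)\,dh.
\]
Next, on the coset $\gamma_i J$, I would change variables $h=\gamma_i j$ with $j\in J$; since the Haar measure on $G(F)$ is left-invariant and restricts to Haar measure on the compact open $J$, this just replaces $dh$ by $dj$ on $J$. The right $J$-invariance of $f$ then gives $f(x\gamma_i j)=f(x\gamma_i)$, so each integrand is constant in $j$.

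Pulling the constant out, each term becomes $f(x\gamma_i)\cdot \mathrm{vol}(J)$, which under the normalization $\mathrm{vol}(J)=1$ yields
\[
X_g(f)(x)=\sum_{i=1}^m f(x\gamma_i),
\]
as claimed. There is no real obstacle here; the only subtlety worth flagging is the measure normalization, and the fact that the double coset $JgJ$ really does decompose into finitely many left $\gamma_i J$-cosets, which follows from the compactness of $JgJ/J$ (the Iwahori subgroup being open in the compact set $JgJ\cdot J^{-1}/J$, or equivalently from finiteness of the affine Weyl group contribution in the Iwahori--Bruhat setup recalled just before the lemma).
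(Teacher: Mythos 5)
Your proof is correct: splitting the integral over the disjoint left cosets $\gamma_i J$, changing variables, and using right $J$-invariance together with the normalization $\mathrm{vol}(J)=1$ is exactly the standard computation. The paper in fact gives no proof of this lemma at all (it is treated as an immediate consequence of the definition of the action cited from Borel), so your write-up simply supplies the routine argument the author omits.
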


We begin by recalling the Iwahori factorization of $J$:
$$J=N_{\mathcal{O}}T^0N_{\mathfrak{p}}^-$$ where $N_{\mathcal{O}}=N\cap K$ and $N_{\mathfrak{p}}^-=N^-\cap J$ where $N^-$ is the opposite unipotent subgroup. In terms of root groups we have $N^-=\prod\limits_{\beta\in\Phi^-}x_{\beta}(F)$ and thus $N_{\mathfrak{p}}^-=\prod\limits_{\beta\in\Phi^-}x_{\beta}(\varpi t_{\beta})$ where $t_{\beta}\in\mathcal{O}$.

The results on coset decompositions below are partially known from Iwahori-Matsumoto \cite{iwmats}:

\begin{lemma}\label{cosrep}
    For any $w\in\mathbf{W}$ with $\ell(w)=k$: $$JwJ=\underset{(t_{\alpha})\in(\mathcal{O}/\mathfrak{p})^k}{\bigcup}\prod\limits_{\alpha\in\Phi|w^{-1}\alpha\in\Phi^-}x_{\alpha}(t_{\alpha})wJ.$$
\end{lemma}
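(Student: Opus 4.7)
The plan is to compute $JwJ/J$ via the standard bijection
\[
J/(J\cap wJw^{-1})\;\xrightarrow{\ \sim\ }\;JwJ/J,\qquad j\mapsto jwJ,
\]
so everything reduces to pinning down $J\cap wJw^{-1}$ and producing explicit representatives of its cosets in $J$.

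First I would invoke the Iwahori factorization $J=J^-\,T^0\,N_\mathcal{O}$ (with unique factorization), where $J^-:=\prod_{\beta\in\Phi^-}x_\beta(\mathfrak{p})$, together with the finer splitting $N_\mathcal{O}=N^{w,+}\,N^{w,-}$ with $N^{w,\pm}:=\prod_{\alpha\in\Phi^+,\,w^{-1}\alpha\in\Phi^{\pm}} x_\alpha(\mathcal{O})$. The two pieces $N^{w,\pm}$ are honest subgroups because their index sets $\Phi^+\cap w\Phi^{\pm}$ are closed under those sums that remain roots.

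Next, write a generic $j\in J$ in the factored form $j=j_-\,t\,n_+^{(+)}\,n_+^{(-)}$ and conjugate by $w^{-1}$ using $w^{-1}x_\gamma(s)w=x_{w^{-1}\gamma}(s)$. Term by term, $w^{-1}J^-w\subset J$ (each coordinate is in $\mathfrak{p}$, so the image lands in $J^-$ or in $N_\mathcal{O}$ according to the sign of $w^{-1}\beta$), $w^{-1}T^0w=T^0\subset J$, and $w^{-1}N^{w,+}w\subset N_\mathcal{O}\subset J$ by construction. Only $n_+^{(-)}$ can obstruct $w^{-1}jw\in J$: its conjugate lies in $\prod_{\beta\in w^{-1}\Phi^+\cap\Phi^-}x_\beta(\mathcal{O})$, which sits inside $J$ exactly on the subset $\prod_{\beta\in w^{-1}\Phi^+\cap\Phi^-}x_\beta(\mathfrak{p})$. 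Hence
\[
J\cap wJw^{-1}=J^-\,T^0\,N^{w,+}\,\prod_{\alpha\in\Phi^+,\,w^{-1}\alpha\in\Phi^-} x_\alpha(\mathfrak{p}),
\]
and $J/(J\cap wJw^{-1})$ is parametrised by $N^{w,-}$ modulo $\prod_{\alpha}x_\alpha(\mathfrak{p})$, namely by products $\prod_{\alpha\in I(w)} x_\alpha(t_\alpha)$ with $(t_\alpha)\in(\mathcal{O}/\mathfrak{p})^{\ell(w)}$, where $I(w)=\{\alpha\in\Phi^+:w^{-1}\alpha\in\Phi^-\}$ has size $k=\ell(w)$. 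Pushing these through the bijection and multiplying by $w$ on the right yields the claimed decomposition.

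The main technical obstacle is the root-subgroup bookkeeping: both the Iwahori factorization and the splitting $N_\mathcal{O}=N^{w,+}N^{w,-}$ depend on choosing a linear ordering of the roots, and rearranging factors invokes the Chevalley commutator relations. The closedness observations above are precisely what guarantees that these rearrangements stay inside the same list of root subgroups, so that the coordinate-wise description of $J\cap wJw^{-1}$ and the passage to quotient representatives are unambiguous once any convenient ordering is fixed.
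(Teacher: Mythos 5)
Your proof is correct and follows essentially the same route as the paper's: both rest on the Iwahori factorization of $J$ and the observation that conjugating by $w^{-1}$ sends every factor into $J$ except the root subgroups $x_\alpha(\mathcal{O})$ with $\alpha\in\Phi^+$, $w^{-1}\alpha\in\Phi^-$, of which there are $\ell(w)$. You merely make explicit the coset bijection $J/(J\cap wJw^{-1})\cong JwJ/J$ and the determination of $J\cap wJw^{-1}$, which the paper leaves implicit.
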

\begin{proof}
By the Iwahori factorization: $J=N_{\mathcal{O}}T^0N_{\mathfrak{p}}^-$ and $N_{\mathfrak{p}}^-=\prod\limits_{\beta\in\Phi^-}x_{\beta}(\varpi t_{\beta})$ where $t_{\beta}\in\mathcal{O}$. But $w^{-1}T^0N_{\mathfrak{p}^-}w\subset J$. Basic Chevalley group theory allows us to write $N_{\mathcal{O}}=\mathcal{N}_w^{-}\mathcal{N}_w^{+}$ where $\mathcal{N}_w^{+}=\langle x_{\alpha}(\mathcal{O})\rangle_{\alpha\in\Delta:w^{-1}\alpha>0}$ and $\mathcal{N}_w^{-}=\langle x_{\alpha}(\mathcal{O})\rangle_{\alpha\in\Delta:w^{-1}\alpha<0}$. Since
$w^{-1}x_{\alpha}(t_{\alpha})w=x_{w^{-1}\alpha}(ct_{\alpha})\in J$ if and only if $w^{-1}\alpha\in\Phi^+$ or $w^{-1}\alpha\in\Phi^-$ and $t_{\alpha}\in\mathfrak{p}$ it follows that $w^{-1}\mathcal{N}_w^{-}w\subseteq J$. Finally observing that $\#\{\alpha\in\Phi|w^{-1}\alpha\in\Phi^-\}=\ell(w^{-1})=\ell(w)$ completes the proof.
\end{proof}\\
Using Lemma \ref{cosrep} and the Iwasawa decomposition we obtain the refined Bruhat decomposition of $G$ relative to the Iwahori subgroup:
 \begin{lemma}[Bruhat-Iwahori decomposition \cite{iwmats}]\label{decomp} 
 There exists a double coset decomposition of $G$ given by:
    $$G=\underset{\lambda\in X_*(T)}{\bigcup}\underset{w\in\mathbf{W}}{\bigcup}N\varpi^{\lambda}wJ$$.
 \end{lemma}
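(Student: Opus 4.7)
The plan is to combine the Iwasawa decomposition $G(F)=NTK$ with the explicit coset description supplied by Lemma \ref{cosrep}. For an arbitrary $g\in G(F)$, Iwasawa gives $g=ndk$ with $n\in N$, $d\in T$, $k\in K$, so the task reduces to decomposing the compact factor $k$ with respect to $J$.

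For the $K$-factor I would invoke the Bruhat decomposition of the finite reductive quotient $K/J\cong G(\mathfrak{f})/B(\mathfrak{f})$, yielding $K=\bigcup_{w\in\mathbf{W}}JwJ$, and fix some $w\in\mathbf{W}$ with $k\in JwJ$. Lemma \ref{cosrep} then rewrites $k=uwj$ for some $j\in J$ and $u=\prod_{\alpha}x_\alpha(t_\alpha)$, the product running over those positive roots with $w^{-1}\alpha\in\Phi^-$; these are precisely the factors surviving from the $N_\mathcal{O}$-piece in the Iwahori factorization $J=N_\mathcal{O}T^0N_{\mathfrak{p}^-}$, so $u\in N$. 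Using that $T$ normalizes $N$, I can push $d$ past $u$ via conjugation:
$$g=ndk=n\,d\,u\,w\,j=\bigl(n\,(dud^{-1})\bigr)\,d\,w\,j\in NdwJ,$$
which yields the asserted covering.

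I do not anticipate a serious obstacle here: once Lemma \ref{cosrep} is in hand, every remaining step is either a standard structural fact (Iwasawa, Bruhat for the residue-field group) or a direct unwinding of definitions. The one subtlety worth flagging is that the parametrization on the right is redundant: replacing $d$ by $dt_0$ for any $t_0\in T^0\subset J$ leaves the double coset $NdwJ$ unchanged, since $t_0w=w(w^{-1}t_0w)$ and $w^{-1}t_0w\in T^0\subset J$. Thus the statement is best read as a covering rather than a disjoint union, consistent with the ``$\bigcup$'' notation used in the lemma.
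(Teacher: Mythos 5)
Your proof is correct and follows exactly the route the paper indicates (the paper offers no details beyond the remark that the lemma follows from Lemma \ref{cosrep} together with the Iwasawa decomposition): Iwasawa gives $g=ndk$, the Bruhat decomposition of $K/J\cong G(\mathfrak{f})/B(\mathfrak{f})$ places $k$ in some $JwJ$, Lemma \ref{cosrep} rewrites this as $uwj$ with $u\in N_{\mathcal{O}}\subset N$, and conjugating $u$ past $d$ lands $g$ in $NdwJ$. Your observation that the union is a covering rather than a disjoint union (because of the $T^0$-redundancy) is also consistent with the paper's use of $\bigcup$ and with its later reduction to values on $T/T^0$.
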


 \section{Iwahori spherical vectors of Steinberg Representations}
 Let $\chi$ be a smooth character of $G$ that is trivial on $K$ and $\delta_B$ be the modular character of $B$. We denote by $I(\chi)=Ind_B^{G}(\delta_B^{1/2}\chi)$ the principal series representation formed by normalized induction from the smooth multiplicative character $\chi$. (see also section 1. of \cite{shal}):
 \begin{definition}
     The Steinberg representation corresponding to an unramified character $\chi$ is the unique irreducible subrepresentation of $I(\chi)$. We denote this representation by ${St}^{\chi}$.
 \end{definition}
 \begin{remark} (see p. 70 of \cite{cassnotes})
     The Steinberg representation can equivalently be defined as the unique irreducible quotient of the representation $I(\chi^{-1}\delta_B^{-1})$.
 \end{remark}

For any split reductive group $G$, the Casselman basis of $I(\chi)^J$ which consists of characteristic functions of the form:
$$f_w(g)=\begin{cases}
        \chi(b)\delta_B(b) & \text{if } g=bwj, b\in B,j\in J,   \\
        0 & \text{otherwise} 
    \end{cases}.
$$
 From Lemma 8.1.2 of \cite{cassnotes} it follows that $dim((St^{\chi})^J)=1$. Then due to Borel (see 5.7, \cite{bor}) we have the well known result that the characteristic functions corresponding to the simple reflections $X_{s_{\alpha}}$ act on $(St^{\chi})^J$ with the scalar $-1$ (independently of $\chi$) also called the sign character of the Weyl group. This enables us to determine the corresponding eigenvector. Let $\mathcal{H}_{\mathbf{W}}$ denote the Weyl group component of $\mathcal{H}(G,J)$.
\begin{proposition} \label{fixvec}
    The Iwahori fixed part of the principal series representation $I(\chi)^J$ contains, up to scalar multiplication, exactly one eigenvector of $\mathcal{H}_{\mathbf{W}}$ affording the sign character given by $\phi_G^-(g)=\sum\limits_{w\in W}(-q)^{-\ell(w)}f_w(g)$(see also p.23 of Reeder \cite{reeder}).
\end{proposition}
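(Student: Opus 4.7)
The plan is to reduce the proposition to a direct computation on the Casselman basis. Because the operators $\{X_{s_\alpha}:\alpha\in\Delta\}$ generate $\mathcal{H}(G(F),J)$, it suffices to prove $X_{s_\alpha}\phi_G^+=q\phi_G^+$ and $X_{s_\alpha}\phi_G^-=-\phi_G^-$ for every simple $\alpha$; these eigenvalues are precisely the values of the spherical and sign characters of the Hecke algebra on $X_{s_\alpha}$, the latter being the scalar $-1$ recorded by Borel.

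The key computation is the Bernstein-type formula
\[
X_{s_\alpha}f_w=\begin{cases}f_{ws_\alpha}, & \text{if }\ell(ws_\alpha)>\ell(w),\\ qf_{ws_\alpha}+(q-1)f_w, & \text{if }\ell(ws_\alpha)<\ell(w).\end{cases}
\]
Lemma \ref{cosum1} and the case $w=s_\alpha$ of Lemma \ref{cosrep} give $(X_{s_\alpha}f_w)(g)=\sum_{t\in\mathcal{O}/\mathfrak{p}}f_w(g\,x_\alpha(t)s_\alpha)$. Evaluating at a Bruhat representative $g=bw'$ from Lemma \ref{decomp} and commuting via $w'x_\alpha(t)w'^{-1}=x_{w'\alpha}(t)$ reduces the question to locating the coset of $b\,x_{w'\alpha}(t)\,w's_\alpha$. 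When $w'\alpha\in\Phi^+$, the factor $x_{w'\alpha}(t)\in N$ is absorbed into $B$, every term lies in $Bw's_\alpha J$, and the sum contributes $q\chi(b)\delta_B(b)[w=w's_\alpha]$. When $w'\alpha\in\Phi^-$, set $\beta=-w'\alpha>0$ and $w's_\alpha=s_\beta w'$; the $t=0$ term lies in $Bw's_\alpha J$, and for $t\neq 0$ the rank-one Bruhat identity $x_{-\beta}(t)=x_\beta(t^{-1})\cdot h\cdot s_\beta\cdot x_\beta(t^{-1})$ (with $h\in T^0$) together with $w'^{-1}x_{-\beta}(u)w'=x_\alpha(u)\in N_{\mathcal{O}}\subset J$ for $u\in\mathcal{O}$ (from $w'^{-1}(-\beta)=\alpha$) and the triviality of $\chi\delta_B$ on $T^0$ shows that each of the $q-1$ remaining terms lies in $Bw'J$ with value $\chi(b)\delta_B(b)$. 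Reading off coefficients in the Casselman basis yields the displayed formula.

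Granted the formula, the coefficient of $f_v$ in $X_{s_\alpha}\sum_w c_wf_w$ is $qc_{vs_\alpha}$ when $vs_\alpha>v$ and $c_{vs_\alpha}+(q-1)c_v$ when $vs_\alpha<v$. Substituting $c_w\equiv 1$ gives $q$ in both cases, so $X_{s_\alpha}\phi_G^+=q\phi_G^+$; substituting $c_w=(-q)^{-\ell(w)}$ gives $-(-q)^{-\ell(v)}$ in both cases (using $q\cdot(-q)^{-1}=-1$ and $-q+(q-1)=-1$), so $X_{s_\alpha}\phi_G^-=-\phi_G^-$. The same recursion determines the coefficients from $c_1$ up to scalar in each eigenspace, confirming the ``up to scalar'' uniqueness. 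The main obstacle is the $w'\alpha\in\Phi^-$ case of the formula derivation: one must carefully move the rank-one Bruhat decomposition past $w'$ and verify that every residual torus factor lies in $T^0\subset\ker(\chi\delta_B)$ while every residual unipotent factor is conjugated into $J$; once this formula is in hand, the verification of the two eigenvalue identities is purely combinatorial.
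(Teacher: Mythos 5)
Your argument is correct, but it reaches the proposition by a genuinely different route than the paper. The paper never computes the full operator $X_{s_\alpha}$ on the Casselman basis; it only evaluates at the identity. Since by Lemma \ref{cosrep} every coset representative of $JwJ/J$ has the form $nw$ with $n\in N_{\mathcal{O}}\subset B$, one gets $X_w(f_{w'})(1)=q^{\ell(w)}\delta_{w,w'}$, and then, \emph{assuming} a vector affording the character $\rho^{\pm}$ exists, comparing $\rho^{\pm}(X_w)\lambda_1$ with $q^{\ell(w)}\lambda_w$ determines all the coefficients in one stroke; the verification that the resulting vectors really are eigenvectors is left as ``easy to verify.'' You instead derive the full recursion $X_{s_\alpha}f_w=f_{ws_\alpha}$ (length increasing) or $qf_{ws_\alpha}+(q-1)f_w$ (length decreasing), which costs you the rank-one Bruhat analysis in the case $w'\alpha\in\Phi^{-}$ --- your handling of it, absorbing $x_\beta(t^{-1})h$ into $B$ with $h\in T^0\subset\ker(\chi\delta_B)$ and conjugating the trailing unipotent into $N_{\mathcal{O}}\subset J$, is right --- but buys an unconditional existence proof (precisely the step the paper asserts without argument) together with the up-to-scalar uniqueness, and the two eigenvalue identities $q\cdot(-q)^{-1}=-1$ and $-q+(q-1)=-1$ check out. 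One caveat, shared with the paper's own proof: the operators $X_{s_\alpha}$, $\alpha\in\Delta$, generate only the finite Hecke algebra inside $\mathcal{H}(G(F),J)$, so both arguments literally establish the eigenvector property for that subalgebra; extending the character to all of $\mathcal{H}(G(F),J)$, as the later sections require, needs an additional input.
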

\begin{proof}
    Consider the complex character of the affine part of $\mathcal{H}(G,J)$ given by $sgn:X_{s_{\alpha}}\to -1, \forall\alpha\in\Delta$. Let $\phi(g)=\sum\limits_{w\in W}c_wf_w(g)$ be an eigenvector of $sgn$ inside $I(\chi)^J$. From Lemma \ref{cosrep} we have that for any $w\in\bf{W}$: $X_w(\phi_G^-)(1)=\rho^-(X_w)\phi_G^-(1)=(-1)^{\ell(w)}c_1$. We observe that for any $w'\in \bf{W}$: $$X_w(f_{w'})(1)=\sum\limits_{(t_{\alpha})\in(\mathcal{O}_F/\mathfrak{p})^{\ell(w)}}f_{w'}(\prod\limits_{\alpha\in\Phi|w^{-1}\alpha\in\Phi^-}x_{\alpha}(t_{\alpha})w)=q^{\ell(w)}\delta_{w,w'}$$ which implies $$X_w(\phi_G^-)(1)=q^{\ell(w)}c_w$$ and thus we obtain: $$c_w=(-q)^{-\ell(w)}c_1$$ respectively.
    It is easy to verify that this is an eigenvector of $\mathcal{H}_{\mathbf{W}}$ indeed.
\end{proof} \par 
Since $\phi_G^-$ is the unique eigenvector inside the induced space affording the $-1$ eigenvalue for every simple root, $\phi_G^-$ is the unique Iwahori fixed vector of the Steinberg representation. Because $\phi_G^-$ is an eigenvector of the full Iwahori Hecke Algebra $\mathcal{H}(G,J)$, the Hecke algebra acts on it by a character which extends the sign character and is dependent on $\chi$. This character is defined by the equality $X_g(\phi_G^-)=\rho_\chi(X_g)\phi_G^-$. The uniqueness of the character is also stated in section 1. of \cite{shal}.

\section{Iwahori fixed Whittaker functions}
We compute the Whittaker function associated to an Iwahori fixed vector of a Steinberg representation. For an unramified character $\psi:F\to\mathbbm{C}^{\times}$ we extend it to a character $\psi:N \rightarrow \mathbbm{C}^{\times}$ in the following way: let $n=\prod\limits_{\alpha\in\Phi^+}x_{\alpha}(t_{\alpha})$, then 
$\psi(n)=\sum\limits_{\alpha\in\Delta^+}\psi(t_{\alpha})$. Let $\mathcal{W}(\pi_{St^{\chi}},\psi)$ be the Whittaker model of $St^{\chi}$ with character $\psi$. It is a space of smooth functions $W:G\to\mathbb{C}$ satisfying my $W(ng)=\psi(n)W(g)$ for any $n\in N$ and $g\in G$, such that this space equipped with the right action of $G$ is isomorphic to $\pi_{St^{\chi}}$. It is well known that this space is unique.  We are interested in $\mathcal{W}(\pi_{St^{\chi}},\psi)^J$ which,is $1$ dimensional by Lemma 8.1.2. of \cite{cassnotes}. Let $W\in \mathcal{W}(\pi_{St^{\chi}},\psi)^J$ with $W\not\equiv 0$, then $W(gj)=W(g),\forall g\in G$ and $j\in J$ and $(F*W)(g)=\rho_{\chi}(F)W(g)$ with $F\in \mathcal{H}(G,J)$ and $\rho_{\chi}$ the character of $\mathcal{H}(G,J)$ as above. 
From Lemma \ref{decomp} it is clear that the values of $W$ are completely determined by the elements in $T\mathbf{W}$ and more specifically by the elements of $(T/T^0)\mathbf{W}$ since $T^0\subseteq J$. This means that we can restrict ourselves to only computing the value of $W$ on $\varpi^{\lambda_d}w$. In what follows we always assume $W$ is as above. 
\par 
We first assume that $\mathcal{G}$ has connected center $\mathcal{Z}$ (ex. $GL_n$ or $GSp_{2n}$). This property implies that the set of fundamental coweights generates the coweight lattice $X^*(\hat{T})$ or equivalently that there exists a basis $\{\lambda^{\beta}|\beta\in\Delta\}$ of $X^*(\hat{T})$ such that $\langle\alpha,\lambda^{\beta}\rangle=\delta_{\alpha,\beta}$ for every $\alpha\in\Delta$ (see the end of the introduction).  Such $\lambda^{\beta}$ exist for a group with a connected center (ex. the group $SL_n(F)$ with finite center $\mu_n$ does not contain any of the $s_{\alpha}$-dominant weights).

To compute the support of $W$ we use the following result, a variant of which appears in \cite{Bump}:
\begin{lemma} \label{supp}
    Let $d\in T$, then $W(dw)=0$ unless $\langle\alpha,\lambda_d\rangle\geq \begin{cases}
        0 & \text{if $w^{-1}\alpha\in \Phi^+$} \\
        -1 & \text{if $w^{-1}\alpha\in \Phi^-$} 
    \end{cases}$ for all $\alpha\in\Delta$. \\ A weight $\lambda_d$ satisfying this condition and the corresponding element $d\in T$ will both be called $w$-dominant. 
\end{lemma}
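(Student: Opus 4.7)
The plan is to exploit the two basic equivariances of $W$---the right $J$-invariance $W(gj)=W(g)$ and the Whittaker quasi-invariance $W(ng)=\psi(n)W(g)$ for $n\in N$---by testing them against one-parameter subgroups $x_{w^{-1}\alpha}(s)\in J$ for each simple root $\alpha\in\Delta$. Conjugating past $dw$ will translate such $j\in J$ into an element of $N$ depending on $\langle\alpha,\lambda(d)\rangle$, and the resulting functional equation will force vanishing of $W(dw)$ once the exponent becomes too negative.

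Concretely, for $s\in F$ the two conjugation formulas recorded in the Introduction give
\[
dw\cdot x_{w^{-1}\alpha}(s)=d\cdot x_\alpha(s)\cdot w=x_\alpha\bigl(\varpi^{\langle\alpha,\lambda(d)\rangle}s\bigr)\cdot dw.
\]
By the Iwahori factorization $J=N_{\mathcal{O}}T^{0}N_{\mathfrak{p}}^{-}$ used in Lemma \ref{cosrep}, one has $x_{w^{-1}\alpha}(s)\in J$ exactly when $s\in\mathcal{O}$ (in the case $w^{-1}\alpha\in\Phi^{+}$) and when $s\in\mathfrak{p}$ (in the case $w^{-1}\alpha\in\Phi^{-}$). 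For such $s$, applying $W$ to both sides yields
\[
W(dw)=W\bigl(dw\cdot x_{w^{-1}\alpha}(s)\bigr)=\psi\bigl(x_\alpha(\varpi^{\langle\alpha,\lambda(d)\rangle}s)\bigr)\,W(dw).
\]

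Since $\alpha\in\Delta$ is simple, the restriction of the character $\psi$ of $N$ to the root subgroup $x_\alpha(F)$ is just the original unramified additive character of $F$, whose conductor is $\mathcal{O}$. If $W(dw)\neq 0$, the displayed identity forces $\psi\bigl(\varpi^{\langle\alpha,\lambda(d)\rangle}s\bigr)=1$ for every admissible $s$, and the conductor condition translates this into $\langle\alpha,\lambda(d)\rangle\geq 0$ when $w^{-1}\alpha\in\Phi^{+}$ and $\langle\alpha,\lambda(d)\rangle\geq -1$ when $w^{-1}\alpha\in\Phi^{-}$.

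The only real subtlety is the restriction $\alpha\in\Delta$: for a non-simple positive root $\alpha$, the character $\psi$ is trivial on $x_\alpha(F)$ and the same manipulation yields only the tautology $W(dw)=W(dw)$, producing no constraint. This is exactly why the lemma indexes its condition by simple roots rather than all of $\Phi^{+}$, and it is the reason no extra information on the support can be extracted from the non-simple directions by this direct test.
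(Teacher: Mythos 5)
Your proof is correct and is essentially the paper's argument read in the opposite direction: the paper fixes $x_{\alpha}(s)$ with $\psi(x_{\alpha}(s))\neq 1$ and conjugates it rightward across $dw$ into $J$, whereas you fix $x_{w^{-1}\alpha}(s)\in J$ and conjugate it leftward into $N$, but the conjugation identity, the resulting functional equation $W(dw)=\psi(x_{\alpha}(\varpi^{\langle\alpha,\lambda(d)\rangle}s))W(dw)$, and the conductor bookkeeping are the same. No changes needed.
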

\begin{proof}Assume that there exist $\alpha \in \Delta$ such that $w^{-1}(\alpha) \in \Phi^{+}$ and $\langle\alpha, \lambda_d\rangle  < 0$ or that there exist  $\alpha \in \Delta$ such that $w^{-1}(\alpha) \in \Phi^{-}$ and $\langle\alpha, \lambda_d\rangle  < -1$. Let $s\in F$ be such that $|s|=q$ and $\psi(x_{\alpha}(s))\ne 1$. Then: $\psi(x_{\alpha}(s))W(dw)=W(x_{\alpha}(s)dw)=W(dd^{-1}x_{\alpha}(s)dw)=W(dww^{-1}x_{\alpha}(\varpi^{-\langle\alpha,\lambda_d\rangle} s)w)= W(dwx_{w^{-1}\alpha}(\varpi^{-\langle\alpha,\lambda_d\rangle} s_1))$ where $|s_1|=|s|$. Since $x_{w^{-1}\alpha}(\varpi^{-\langle\alpha,\lambda_d\rangle} s_1)\in J$ in each case, we then have that $\psi(x_{\alpha}(s))W(dw)=W(dwx_{w^{-1}\alpha}(\varpi^{-\langle\alpha,\lambda_d\rangle} s_1))=W(dw)$ and thus it follows that $W(dw)=0$. 
\end{proof}

\begin{definition}\label{w-dom}
        We denote the semigroup of $w$-dominant elements in $T$ by $T_w^+$. Forthermore, for a Weyl element $w$ we define $\lambda_w=-\sum\limits_{\underset{w^{-1}\alpha\in\Phi^-}{\alpha\in\Delta}}\lambda^{\alpha}$ and define the element $d_w\in T/T^0$ to be the image of $\lambda_w$ evaluated at $\varpi$ i.e. $d_w=\lambda_w(\varpi)$. It is clear that $\lambda_{d_w}=\lambda_w$.
\end{definition}

\begin{remark} \label{domd} It is straightforward to verify that $d_w$ is the unique element in $T/T^0$ satisfying the condition
 $\langle\alpha,\lambda_{d_w}\rangle= \begin{cases}
        0 & \text{if $w^{-1}\alpha\in \Phi^+$} \\
        -1 & \text{if $w^{-1}\alpha\in \Phi^-$} 
    \end{cases},\forall\alpha\in\Delta$.
\end{remark}

\begin{remark}
  For any $d\in T$ we have $W(dw)=0$ unless $d\in T_w^+$.
\end{remark}

 To ease notation we let $T^+=T_1^+$. It is clear from the defining relation of Lemma \ref{supp} that $T_1^+\subseteq T_w^+,$ for all $ w\in\mathbf{W}$, and in particular $T_w^+=d_wT^+$.

\begin{lemma} \label{permlem}
         For a Weyl element $w$ and $d\in T^+$ the following relation is satisfied:
 $$W(dw)=(-q)^{-\ell(w)}W(d).$$
\end{lemma}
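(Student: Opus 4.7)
The plan is to evaluate $(X_w \ast W)(d)$ in two different ways and equate the results. Since $W$ is an Iwahori-Whittaker vector for the Steinberg representation, by Proposition~\ref{fixvec} it affords the sign character of $\mathcal{H}(G(F),J)$. Writing any $w \in \mathbf{W}$ as a reduced product $s_{\alpha_1}\cdots s_{\alpha_{\ell(w)}}$ and using the Iwahori--Hecke relation $X_{s_\alpha}X_{w'} = X_{s_\alpha w'}$ when $\ell(s_\alpha w') = \ell(w')+1$, the sign character evaluates $X_w$ to $(-1)^{\ell(w)}$. This gives one side of the computation:
\begin{equation*}
X_w(W)(d) = (-1)^{\ell(w)} W(d).
\end{equation*}

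For the other side, I would combine Lemma~\ref{cosum1} with the explicit coset representatives furnished by Lemma~\ref{cosrep} to obtain
\begin{equation*}
X_w(W)(d) = \sum_{(t_\alpha) \in (\mathcal{O}/\mathfrak{p})^{\ell(w)}} W\Bigl(d \prod_{\substack{\alpha \in \Phi^+ \\ w^{-1}\alpha \in \Phi^-}} x_\alpha(t_\alpha)\, w\Bigr).
\end{equation*}
Then I push $d$ to the right of the unipotent factor via the torus conjugation formula $d x_\alpha(t) d^{-1} = x_\alpha(\varpi^{\langle \alpha,\lambda(d)\rangle} t)$, and invoke the Whittaker equivariance $W(n g) = \psi(n) W(g)$ to extract the character. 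This rewrites the right-hand side as
\begin{equation*}
X_w(W)(d) = W(dw) \sum_{(t_\alpha)} \psi\Bigl(\prod_\alpha x_\alpha(\varpi^{\langle \alpha,\lambda(d)\rangle} t_\alpha)\Bigr).
\end{equation*}

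The decisive step is showing that each $\psi$-factor above is trivial. This is where the dominance hypothesis $d \in T^+$ enters: since the product is indexed by positive roots $\alpha$, the pairings $\langle \alpha, \lambda(d)\rangle$ are non-negative, so each entry $\varpi^{\langle \alpha,\lambda(d)\rangle} t_\alpha$ lies in $\mathcal{O}$. Non-simple root factors contribute trivially by the definition of $\psi$ on $N$, while on simple root subgroups the contribution is $\psi_F$ evaluated on $\mathcal{O}$, which vanishes by the unramified hypothesis. Hence the sum collapses to $q^{\ell(w)} W(dw)$, and equating with $(-1)^{\ell(w)} W(d)$ yields the stated formula.

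The main obstacle is precisely this $\psi$-triviality argument: it rests on the strictly dominant condition $d \in T^+$ and the unramifiedness of $\psi$, and would fail for a merely $w$-dominant weight (where some $\langle \alpha, \lambda(d)\rangle = -1$ produces a genuine Gauss-type sum rather than $q^{\ell(w)}$). Everything else reduces to careful bookkeeping with the Iwahori coset decomposition already established.
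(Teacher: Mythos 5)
Your proof is correct and follows essentially the same route as the paper: both evaluate $X_w(W)(d)$ once via the sign character of $\mathcal{H}(G(F),J)$ and once via Lemmas \ref{cosum1} and \ref{cosrep}, then collapse the resulting sum to $q^{\ell(w)}W(dw)$. You in fact spell out the conjugation-and-$\psi$-triviality step (using $d\in T^+$ and the unramifiedness of $\psi$) that the paper's proof leaves implicit, and your remark that this step fails for merely $w$-dominant $d$ correctly identifies why the hypothesis $d\in T^+$ is needed.
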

\begin{proof}
Inductively it follows that: $X_{w}(W)(g)=(-1)^{\ell(w)}W(g)$.
From Lemma \ref{cosrep} however we have: $JwJ=\underset{(t_1,\dots t_{\ell(w)})\in \mathcal({O}_F/\mathfrak{p})^{\ell(w)}}{\bigcup}\prod\limits_{j=1}^{\ell(w)}x_{\alpha_j}(t_{j})wJ$ with $\prod\limits_{j=1}^{\ell(w)}x_{\alpha_j}(t_j)\in N_{\mathcal{O}}$ and $\mu(JwJ)=|JwJ/J|=q^{\ell(w)}$. Lemma \ref{cosum1} implies that  $X_{w}(W)(d)=\sum\limits_{(t_1,\dots t_{\ell(w)})\in \mathcal({O}_F/\mathfrak{p})^{\ell(w)}}W(d\prod\limits_{j=1}^{\ell(w)}x_{\alpha_j}(t_j)w)=q^{\ell(w)}W(dw)$ and since $dx_{\alpha_i}(t_j)d^{-1}\in N_{\mathcal{O}}^+$ for any $d\in T^+$, we obtain that $W(dw)=(-q)^{-\ell(w)}W(d)$.
\end{proof}
\begin{lemma} \label{dreps}
    For any $d\in T^+$ we have $JdJ\subseteq  N_{\mathcal{O}}^+dJ$ hence $JdJ=\bigcup\limits_{\ell\in L}n_{\ell}dJ$ for a finite set of coset representatives $n_{\ell}\in N_{\mathcal{O}}^{+}$. Notice that $|L|=\mu(JdJ)$.
\end{lemma}
\begin{proof}
    For any $d\in T^+$ and any $\alpha\in\Phi^+$ write $a$ as a finite sum of simple roots $\alpha=\sum\limits_{i}\alpha_i$. By definition now $\langle\alpha_i,\lambda_d\rangle\geq 0$ for every $\alpha_i\in\Delta^+$ and thus $\langle\alpha,\lambda_d\rangle\geq 0$. Using relation \eqref{actroot} gives: $$d^{-1}x_{-\alpha}(s\varpi)d=x_{-\alpha}(s'\varpi^{1+\langle\alpha,\lambda_d\rangle})$$ where $s,s'\in\mathcal{O}$ and thus $d^{-1}x_{-\alpha}(s\varpi)d\in J$ or equivalently $d^{-1}N_{\mathfrak{p}}^-d\subseteq J$. From the Iwahori factorization finally it follows that $JdJ=N_{\mathcal{O}}^+dd^{-1}T^0N_{\mathfrak{p}}^-dJ\subseteq N_{\mathcal{O}}^+dJ$.
\end{proof} 

We are now ready to show that:
\begin{theorem}[Diagonal Whittaker values] \label{diagon} ~\\
 Let $W\in \mathcal{W}(\pi_{St^{\chi}},\psi)^{J}$ with $W(1)=1$ be such that $W$ is an eigenfunction of $\mathcal{H}(G,J)$ with $$(F*W)(g)=\rho_{\chi}(F)W(g).$$
	Then $W$ has the following diagonal values:
$$W(d)=\begin{cases}
       (\chi\delta_B)(d)  & \text{if $d\in T^+$} \\
        0 & \text{otherwise} 
    \end{cases}.$$
 \end{theorem}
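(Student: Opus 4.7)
The plan is to separate the zero and non-zero regimes of $W(d)$. The vanishing follows directly from Lemma \ref{supp}, while for $d\in T^+$ I would extract the value from the Hecke eigenvalue identity $(X_d*W)(1)=\rho_\chi(X_d)W(1)$ by expanding both sides with the Iwahori coset decompositions of Lemmas \ref{dreps} and \ref{cosum1}.

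Setting $w=1$ in Lemma \ref{supp} gives $w^{-1}\alpha=\alpha\in\Phi^+$ for every $\alpha\in\Delta$, so the lemma forces $\langle\alpha,\lambda(d)\rangle\geq 0$ for all simple $\alpha$, which is exactly the condition $d\in T^+$; failure of this immediately yields $W(d)=0$.

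Now fix $d\in T^+$. By Lemma \ref{dreps} I write $JdJ=\bigcup_\ell(n_{\mathcal{O}}^+)_\ell dJ$ with representatives parametrised by $N_{\mathcal{O}}^+/(dN_{\mathcal{O}}^+d^{-1})$. Applying Lemma \ref{cosum1} and the Whittaker equivariance $W(ng)=\psi(n)W(g)$ yields
$$\rho_\chi(X_d)W(1)=\sum_\ell W((n_{\mathcal{O}}^+)_\ell d)=W(d)\sum_\ell \psi((n_{\mathcal{O}}^+)_\ell).$$
Since $\psi$ is unramified and each $(n_{\mathcal{O}}^+)_\ell\in N_{\mathcal{O}}$, every summand equals $1$, and the right-hand sum is just the cardinality of the coset space. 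Using $N_{\mathcal{O}}^+=\prod_{\alpha\in\Phi^+}x_\alpha(\mathcal{O})$ together with $dx_\alpha(t)d^{-1}=x_\alpha(\varpi^{\langle\alpha,\lambda(d)\rangle}t)$, this cardinality evaluates to $q^{\langle 2\rho,\lambda(d)\rangle}=\delta_B(d)^{-1}$, so $W(d)=\delta_B(d)\,\rho_\chi(X_d)$.

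It remains to pin down $\rho_\chi(X_d)$, and for that I would run exactly the same coset expansion on the concrete eigenvector $\phi_G^-\in I(\chi\delta_B^{1/2})^J$ of Proposition \ref{fixvec}. Each representative $(n_{\mathcal{O}}^+)_\ell d$ lies in the Borel cell, so only the $w=1$ summand of $\phi_G^-$ survives and contributes $f_1((n_{\mathcal{O}}^+)_\ell d)=\chi(d)\delta_B(d)$. Summing over the $\delta_B(d)^{-1}$ cosets and dividing by $\phi_G^-(1)=1$ gives $\rho_\chi(X_d)=\chi(d)$, and combining with the Whittaker relation above yields $W(d)=\chi(d)\delta_B(d)=(\chi\delta_B)(d)$. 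The one delicate point is matching the coset count $\delta_B(d)^{-1}$ consistently on both the Whittaker and the $\phi_G^-$ sides; once this is pinned down correctly, the two factors of $\delta_B(d)^{\pm 1}$ cancel and deliver the stated formula.
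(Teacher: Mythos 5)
Your argument is correct and follows essentially the same route as the paper: vanishing off $T^+$ via Lemma \ref{supp}, and for $d\in T^+$ the eigenvalue $\rho_\chi(X_d)$ is extracted by applying the coset decomposition of Lemmas \ref{cosum1} and \ref{dreps} both to $\phi_G^-$ (where only the $f_1$ summand survives) and to $W$, then cancelling the coset count. The only cosmetic difference is that you evaluate $\mu(JdJ)=\delta_B(d)^{-1}$ explicitly, whereas the paper leaves it symbolic since it cancels in the end anyway.
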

 \begin{proof}
Let $\epsilon_{g}=\rho_{\chi}(X_{g})\in\mathbb{C}$ for any $g\in G$. 
We want to compute $\epsilon_d$ for $d\in T^+$. To do that we will use the function $\phi_G^-\in St^{\chi}$ from Proposition \ref{fixvec}, since $\phi_G^-$ satisfies the same relation $F(\phi_G^-)(g)=\rho_{\chi}(F)\phi_G^-(g)$ as $W$ we get that $$\epsilon_h\phi_G^-(g)=X_h(\phi_G^-)(g), \text{for all } h\in G$$
and thus setting $h=d$ and $g=1$:
$$\epsilon_d\phi_G^-(1)=X_d(\phi_G^-)(1).$$
By Proposition \ref{fixvec}: $\phi_G^-(1)=(-q)^0f_1(1)=1$, so it remains to compute $X_d(\phi_G^-)(1)$. For this we notice that $X_d(f_w)(1)=\int_{JdJ}f_w(x)dx$ which from Lemma \ref{cosum1} and Lemma \ref{dreps} for $d\in T^+$ equals $\sum\limits_{\ell\in L}f_w(n_{\ell}d)$ which means that $X_d(f_w)(1)\ne 0$ if and only if $w=1$ and in that case $X_d(f_1)(1)=\sum\limits_{\ell\in L}(\chi\delta_B)(n_{\ell}d)=\sum\limits_{\ell\in L}(\chi\delta_B)(d)=\mu(JdJ)(\chi\delta_B)(d)$. Substituting in the original euation gives us: $$\frac{\epsilon_d}{\mu(JdJ)}=(\chi\delta_B)(d).$$ Now using Lemma \ref{cosum1} again for the action of $X_d$ on $W$ yields: $$\epsilon_dW(g)=X_d(W)(g)=\sum\limits_{\ell}W(gn_{\ell}d)$$ and thus since $W(1)=1$, for $g=1$ we obtain: $$\epsilon_d=\epsilon_d W(1)=\sum\limits_{\ell\in L}W(n_{\ell}d)=\sum\limits_{\ell\in L}\psi(n_{\ell})W(d)=\sum\limits_{\ell\in L}W(d)=\mu(JdJ)W(d).$$ Solving for $W(d)$ yields: $$W(d)=\frac{\epsilon_d}{\mu(JdJ)}=(\chi\delta_B)(d), \text{ for any } d\in T^+.$$
 \end{proof}
\begin{lemma} \label{diagcos}
  For any $w\in\mathbf{W}$ we have $Jd_wwJ=\underset{\ell\in L_w}{\bigcup}n_{\ell}^-d_wwJ$, where $n_{\ell}^-\in N^-_{\mathfrak{p}}$, $L_w$ is a finite set of indices and $|L_w|=\mu(Jd_wwJ)$.
\end{lemma}
\begin{proof}
    We write $J=N^-_{\mathfrak{p}}T^0N_{\mathcal{O}}$ as before. For any $n\in N_{\mathcal{O}}$ observe that $n=\prod\limits_{i=1}^rx_{a_i}(s_{a_i})$ for some $r\in\mathbb{N}$ with $a_i\in\Delta^+$ and $s_{a_i}\in \mathcal{O}$ for all indices. Since $w^{-1}d_w^{-1}nd_ww=\prod\limits_{i=1}^r(w^{-1}d_w^{-1}x_{a_i}(s_{a_i})d_ww)$ it suffices to show that $w^{-1}d_w^{-1}x_{\beta}(s_{\beta})d_ww=x_{w^{-1}(\beta)}{(\varpi^{-\langle\beta,\lambda_{d_w}\rangle}s'_{\beta})}\in J, \forall \beta\in\Delta^+$ and any $s_{\beta}\in\mathcal{O}$ and the corresponding $s_{\beta}'\in\mathcal{O}$. Dividing into two cases, we see that if $\beta\in\Delta^+:w^{-1}\beta\in\Phi^+$ then $\langle\beta,\lambda_{d_w}\rangle=0$ and similarly if $w^{-1}\beta\in\Phi^-$ then $\langle\beta,\lambda_{d_w}\rangle=-1$. Thus in every case
    $x_{w^{-1}(\beta)}{(\varpi^{-\langle\beta,\lambda_{d_w}\rangle}s_{\beta})}\in J$.
\end{proof}
\begin{lemma}\label{aux}
    For every Weyl element $w\in\mathbf{W}$ we have $d_{w_0w}=w_0d_ww_0d_{w_0}z_w$ for some $z_w\in\mathcal{Z}$.
\end{lemma}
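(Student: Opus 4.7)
The plan is to reduce the asserted identity to a comparison of pairings with the simple roots. Since every $z \in \mathcal{Z}(F)$ satisfies $\langle\alpha,\lambda(z)\rangle = 0$ for all roots $\alpha$, if I can verify
$$\langle\alpha,\lambda(d_{w_0w})\rangle = \langle\alpha,\lambda(w_0d_ww_0) + \lambda(d_{w_0})\rangle$$
for every $\alpha \in \Delta$, then the difference $d_{w_0w}(w_0 d_w w_0 d_{w_0})^{-1}$ lies in the common kernel of all roots, i.e.\ in $\mathcal{Z}(F)$, producing the required $z_w$.

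First I would use the conjugation action recalled in the introduction, which gives $\lambda(w_0 d_w w_0) = w_0 \cdot \lambda(d_w)$ in $X_*(T)$ (using $w_0^2 = 1$), together with the standard adjunction $\langle\alpha,w_0\mu\rangle = \langle w_0\alpha,\mu\rangle$. Since $w_0$ sends $\Delta$ bijectively onto $-\Delta$, for each $\alpha \in \Delta$ the element $\alpha^\ast := -w_0\alpha$ is again simple, and Definition \ref{w-dom} applied to $d_w$ yields
$$\langle\alpha,\lambda(w_0 d_w w_0)\rangle = -\langle\alpha^\ast,\lambda(d_w)\rangle = \begin{cases} 0 & \text{if } w^{-1}\alpha^\ast \in \Phi^+ \\ 1 & \text{if } w^{-1}\alpha^\ast \in \Phi^- \end{cases}.$$

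Next I would rephrase this case distinction back in terms of $w_0 w$. Using $w_0 = w_0^{-1}$ one has $w^{-1}\alpha^\ast = -w^{-1}w_0\alpha = -(w_0 w)^{-1}\alpha$, so $w^{-1}\alpha^\ast \in \Phi^{\pm}$ is equivalent to $(w_0w)^{-1}\alpha \in \Phi^{\mp}$. Combined with the observation that $\langle\alpha,\lambda(d_{w_0})\rangle = -1$ for every $\alpha \in \Delta$ (directly from Definition \ref{w-dom}, since $w_0^{-1}\alpha \in \Phi^-$ automatically), the sum becomes
$$\langle\alpha,\lambda(w_0 d_w w_0 d_{w_0})\rangle = \begin{cases} -1 & \text{if } (w_0w)^{-1}\alpha \in \Phi^- \\ 0 & \text{if } (w_0w)^{-1}\alpha \in \Phi^+ \end{cases},$$
which is exactly the defining relation for $\lambda(d_{w_0w})$.

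The argument is essentially bookkeeping; the only conceptual input is the reduction to root pairings via $\mathcal{Z}(F) \subseteq \bigcap_{\alpha}\ker\alpha$. The main potential pitfall I foresee is careful tracking of the signs in the conversion $w^{-1}\alpha^\ast = -(w_0 w)^{-1}\alpha$ and in the dualization $\langle\alpha,w_0\mu\rangle = \langle w_0\alpha,\mu\rangle$, but both rest cleanly on $w_0^2 = 1$ and $w_0(\Delta) = -\Delta$, so no genuine obstacle arises.
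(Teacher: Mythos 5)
Your proposal is correct and follows essentially the same route as the paper: both arguments reduce the identity to checking $\langle\alpha,\lambda(d_{w_0w})\rangle=\langle\alpha,\lambda(w_0d_ww_0)\rangle+\langle\alpha,\lambda(d_{w_0})\rangle$ for all simple $\alpha$, using $\langle\alpha,w_0\mu\rangle=\langle w_0\alpha,\mu\rangle$, the fact that $-w_0\alpha$ is again simple, and the defining conditions on $d_w$ and $d_{w_0}$, concluding that the two sides agree modulo the common kernel of the root pairings. Your sign bookkeeping via $\alpha^\ast=-w_0\alpha$ and the equivalence $w^{-1}\alpha^\ast\in\Phi^{\pm}\iff(w_0w)^{-1}\alpha\in\Phi^{\mp}$ matches the paper's case analysis exactly.
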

\begin{proof}
 For all $\alpha\in\Delta^+$ we observe that $w_0\alpha\in\Delta^-$ and compute:  $\langle \alpha, \lambda_{d_{w_0}}+\lambda_{w_0d_ww_0}\rangle=\langle\alpha, \lambda_{d_{w_0}}\rangle+\langle\alpha, \lambda_{w_0d_ww_0}\rangle\overset{Remark   \ref{rem1}}{=}-1+\langle w_0\alpha,\lambda_{d_w}\rangle=-1-\langle -w_0\alpha,\lambda_{d_w}\rangle$. Assume that $w^{-1}w_0\alpha\in\Phi^-$ i.e. $w^{-1}(-w_0\alpha)\in\Phi^+$ and set $\beta=-w_0\alpha$, then $\beta\in\Delta^+$ and by definition $\langle \beta,\lambda_{d_w}\rangle=0$. Similarly when $w^{-1}w_0\alpha\in\Phi^+$ i.e. $w^{-1}(-w_0\alpha)\in\Phi^-$: $\langle -w_0\alpha,\lambda_{d_w}\rangle=-1$ thus $\langle \alpha, \lambda_{d_{w_0}}+\lambda_{w_0d_ww_0}\rangle=-1-(-1)=
      0  \text{ if } w^{-1}w_0\alpha\in\Phi^+$ and 
      $\langle\alpha,\lambda_{d_{w_0}}+\lambda_{w_0d_ww_0}\rangle=-1  \text{ if } w^{-1}w_0\alpha\in\Phi^-$ which is exactly $\langle \alpha, \lambda_{d_{w_0w}}\rangle$ by definition. This implies that $w_0d_ww_0d_{w_0}$ and $d_{w_0w}$ are equal up to a central element. 
\end{proof}
\begin{theorem}[Whittaker values at every cell $Tw$] \label{main} ~\\
Under the assumptions of Theorem \ref{diagon}
    $$W(dw)=\begin{cases}
      (\chi\delta_B)(d)(-q)^{-\ell(w)}  & \text{if $\lambda_d$ is $w$-dominant} \\
        0 & \text{otherwise} 
    \end{cases}$$
\end{theorem}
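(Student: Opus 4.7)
The proof separates naturally into two cases. When $\lambda(d)$ is not $w$-dominant, Lemma \ref{supp} directly yields $W(dw)=0$, so the focus is on the $w$-dominant case. Using $T_w^+=d_w T^+$, I write $d=d_w d'$ with $d'\in T^+$ and aim to establish $W(d_w d' w)=(\chi\delta_B)(d_w d')(-q)^{-\ell(w)}$.

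My plan is to reduce this to two sub-problems: first the base case $W(d_w w)=(\chi\delta_B)(d_w)(-q)^{-\ell(w)}$, and then the extension to general $d'\in T^+$. For the base case I would apply the Hecke eigenvalue equation $X_{d_w w}*W=\epsilon_{d_w w}W$ at $g=1$. Combining Lemma \ref{cosum1} with the coset decomposition from Lemma \ref{diagcos} gives
\[
\epsilon_{d_w w}=\sum_{\gamma} W(\gamma),\qquad \gamma=\prod_{\alpha\in\Phi^-}x_\alpha(\varpi s_\alpha)\,d_w w.
\]
The eigenvalue $\epsilon_{d_w w}$ is computed independently by evaluating $X_{d_w w}(\phi_G^-)(1)$: each representative $\gamma=n^{-}d_w w$ with $n^-\in N^-_{\mathfrak{p}}$ is placed in some Bruhat cell $B w'J$, and $\phi_G^-(\gamma)=(-q)^{-\ell(w')}(\chi\delta_B)(b)$ when $\gamma=bw'j$. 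Equating the two expressions and applying Whittaker covariance on the $W$-side, assisted by the symmetry $d_{w_0w}\equiv w_0d_ww_0d_{w_0}\pmod{\mathcal{Z}(F)}$ from Lemma \ref{aux}, should collapse the sum and yield the base value.

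For the extension to arbitrary $d'\in T^+$, I would split according to whether $d_w d'\in T^+$ or not. When $d_w d'$ is dominant enough to lie in $T^+$, Lemma \ref{permlem} applied at $d_w d'$ combined with Theorem \ref{diagon} immediately gives the formula $(\chi\delta_B)(d_w d')(-q)^{-\ell(w)}$. For the remaining cases, I apply a Hecke operator $X_{d''}$ with $d''\in T^+$ chosen large enough that $d_w d' d''\in T^+$, expanding via Lemma \ref{dreps}: this expresses $W(d_w d' w)$ as a known combination of values $W(d_w d' d'' w)$ from the previous sub-case, together with the explicit eigenvalue $\epsilon_{d''}=\mu(Jd''J)(\chi\delta_B)(d'')$ from the proof of Theorem \ref{diagon}.

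The main obstacle is the Bruhat-cell analysis in the base case: identifying the cell of each representative $n^- d_w w$ and showing that the resulting sum over $\phi_G^-$ values collapses to match the $W$-side. The $\varpi$-valuations produced by conjugating $n^-$ through $d_w$ are controlled by Definition \ref{w-dom}, and the symmetry in Lemma \ref{aux} is what forces the combinatorics of the two sums to align and produce the clean product formula rather than a more complicated expression.
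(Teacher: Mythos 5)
The vanishing half (via Lemma \ref{supp}) and the sub-case $d\in T^+$ (Lemma \ref{permlem} plus Theorem \ref{diagon}) are fine, but the two steps you yourself flag as the crux are exactly where the argument breaks. First, the base case at $g=1$: the coset representatives of $Jd_wwJ$ supplied by Lemma \ref{diagcos} are $\prod_{\alpha\in\Phi^-}x_\alpha(\varpi s_\alpha)\,d_ww$, i.e.\ they carry a factor in the \emph{negative} unipotent $N^-_{\mathfrak p}$. The Whittaker function has covariance only under left translation by $N$, so ``applying Whittaker covariance on the $W$-side'' does nothing to the sum $\sum_{\bar s}W\bigl(\prod_\alpha x_\alpha(\varpi s_\alpha)d_ww\bigr)$, and the independent computation of $\epsilon_{d_ww}$ via $X_{d_ww}(\phi_G^-)(1)$ would require the Iwasawa/Bruhat decomposition of every representative $n^-d_ww$ --- precisely the hard combinatorics the theorem is meant to avoid, and which you do not carry out. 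The paper's proof evades both problems with one move you are missing: it evaluates the eigenvalue identity not at $g=1$ but at $g=dd_{w_0}w_0$, so that conjugation by $w_0d_{w_0}^{-1}$ carries $N^-_{\mathfrak p}$ into $N_{\mathcal O}$ (the exponents $\langle\alpha,-\lambda(d_{w_0})\rangle=-1$ cancel the $\varpi$), where $\psi$ is trivial; the sum then collapses to $\mu(Jd_wwJ)\,W(dd_{w_0w}w_0w)$ by Lemma \ref{aux}, and $\epsilon_{d_ww}$ is determined \emph{a posteriori} by comparing this functional equation with Lemma \ref{permlem} at special values of $d$, never by locating Bruhat cells.

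The extension step is also not sound as stated. The Hecke algebra acts by right translation, so $X_{d''}(W)(d_wd'w)=\sum_\ell W\bigl(d_wd'w\,(n^+_{\mathcal O})_\ell\,d''\bigr)$: the new torus element lands to the \emph{right} of $w$, and pulling it across $w$ replaces $\lambda(d'')$ by $w^{-1}\lambda(d'')$, which need not be dominant, so you do not obtain $W(d_wd'd''w)$ as claimed. The paper needs no such induction: its identity $(**)$, $W(dw)=(-q)^{-\ell(w)}W(d_w^{-1})^{-1}W(dd_w^{-1})$, holds for all $d\in T$ at once and reduces every $w$-dominant $d$ directly to Theorem \ref{diagon}. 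To repair your proof you would need either to justify the cell analysis at $g=1$ in full, or to adopt the shift to the base point $dd_{w_0}w_0$ together with Lemma \ref{aux}.
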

\begin{proof}
We use again the relation $X_{d_ww}(W)(g)=\epsilon_{d_ww}W(g)$. By Lemma \ref{diagcos} we have
$\epsilon_{d_ww}W(g)=\sum\limits_{\ell\in L_w}W(gn_{\ell}^-d_ww)$. We set $g=dd_{w_0}w_0(=dw_0d_{w_0}^{-1})$ and write $n_{\ell}^-=\prod\limits_{\alpha\in \Phi^-}x_{\alpha}(\varpi s_{\alpha})$ for some $s_{\alpha}\in\mathcal{O}$ depending on the indice $\ell$. We want to show that $\psi(gn_{\ell}^-g^{-1})=\psi(dw_0d_{w_0}^{-1}n_{\ell}^-d_{w_0}w_0d^{-1})=1$ for any $d\in T^+$. For this purpose we distinguish two cases for each $x_{\alpha}(\varpi s_{\alpha})$: 
\begin{itemize}
    \item for any $\alpha\in\Delta^-$: $w_0d_{w_0}^{-1}x_{\alpha}(\varpi s_{\alpha})d_{w_0}w_0=x_{w_0(\alpha)}(\varpi s_{\alpha}\varpi^{\langle\alpha,\lambda_{d_{w_0}^{-1}}\rangle})\in N_{\mathcal{O}}$ since $\langle\alpha,-\lambda_{d_{w_0}}\rangle={-1}$ by Remark \ref{domd}. Setting $n^+=w_0d_{w_0}^{-1}x_{\alpha}(\varpi s_{\alpha})d_{w_0}w_0$, it follows that $\psi(dn^+d^{-1})=1$ for any $d\in T^+$.
    \item for any $\alpha\in\Phi^-\backslash\Delta^-$ and $d\in T$: $\psi(dx_{w_0(\alpha)}(\varpi s_{\alpha}\varpi^{\langle\alpha,\lambda_{d_{w_0}^{-1}}\rangle})d^{-1})=1$ since $w_0\alpha\not\in\Delta$.
\end{itemize} For $g=dd_{w_0}w_0$ we then have that $W(gn_{\ell}^-g^{-1}gd_ww)=W(gd_ww)$, giving the relation:  $$\epsilon_{d_ww}W(dd_{w_0}w_0)=\mu(Jd_wwJ)W(dd_{w_0}w_0d_ww)=\mu(Jd_wwJ)W(dd_{w_0}w_0d_ww_0w_0w)$$ for any $d\in T^+$. Using Lemma \ref{aux} now, this expression becomes:
$$\epsilon_{d_ww}W(dd_{w_0}w_0)=\mu(Jd_wwJ)W(dd_{w_0w}w_0w),\text{ for all } d\in T.$$
Notice that the above relation is extended to all of $T$ and not just $d\in T^+$ since $d\not\in T^+$ means $dd_{w_0}\not\in T_{w_0}^+$ and $dd_{w_0w}\not\in T_{w_0w}^+$, so both sides are $0$.
Setting $w=w_0$ and comparing scalar terms with Lemma \ref{permlem} for $d=d_{w_0}^{-1}$ we get that $\epsilon_{d_{w_0}w_0}=(-q)^{\ell(w_0)}\mu(Jd_{w_0}w_0J)W(d_{w_0}^{-1})$. Substituting the value of $\epsilon_{d_{w_0}w_0}$ and setting $d\to dd_{w_0}^{-1}$ leads to: 
$$W(dw_0)=(-q)^{-\ell(w_0)}W(d_{w_0}^{-1})^{-1}W(dd_{w_0}^{-1}), \text{ for all } d\in T$$ Similarly we set $w\to w_0w$ and compute $\epsilon_{d_{w_0w}w_0w}=(-q)^{\ell(w_0)-\ell(w)}\mu(Jd_{ww_0}ww_0J)W(d_{w_0}^{-1}d_w)$. Substituting this value of $\epsilon_{d_{w_0w}w_0w}$ and using the above relation for the $W(dw_0)$ term implies:
$$W(dw)=(-q)^{-\ell(w)}W(d_{w}^{-1})^{-1}W(dd_{w}^{-1}), \text{ for all } d\in T \; \; (**)$$ Note that $d_w^{-1}\in T^+$ so a direct application of Theorem \ref{diagon} proves the claim since: 
\begin{itemize}
    \item if $d\not\in T_w^+$ both sides evaluate to zero,
    \item if $d\in T_w^+$ then it follows that $dd_w^{-1}\in T^+$, and the expression evaluates to $W(dw)=(-q)^{-\ell(w)}(\chi\delta_B)(d_w^{-1})^{-1}(\chi\delta_B)(dd_w^{-1})=(-q)^{-\ell(w)}(\chi\delta_B)(d)$.
\end{itemize}
\end{proof}

From the relation $(F*W)(g)=\rho_{\chi}(F)W(g)$ and the uniqueness in Theorem \ref{main} we conclude that: 
\begin{theorem} \label{StWhit}
    The function described in Theorem \ref{main} $$W(dw)=\begin{cases}
     (\chi\delta_B)(d)(-q)^{-\ell(w)}  & \text{if $\lambda_d$ is $w$-dominant} \\
        0 & \text{otherwise} 
    \end{cases}$$ is the unique normalized Whittaker function that corresponds to the Iwahori spherical vector $\phi_G^-\in (St^{\chi})^J$, under the condition $W(1)=1$.
\end{theorem}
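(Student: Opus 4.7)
The plan is to deduce Theorem~\ref{StWhit} directly from Theorem~\ref{main} by checking that the Whittaker transform of $\phi_G^-$ satisfies the hypotheses of that theorem. Since the space $\mathcal{W}(\pi_{St^{\chi}},\psi)^{J}$ is one-dimensional and Theorem~\ref{main} already pins down the unique $W$ in this space normalized by $W(1)=1$ and having Hecke eigencharacter $\rho_{\chi}$, it suffices to exhibit
\[
\tilde W(g) := \mathcal{W}_{\psi}(\phi_G^-)(g) = \int_{N}\phi_G^-(w_0 n g)\bar\psi(n)\,dn
\]
as a nonzero element of this space that is a $\rho_{\chi}$-eigenfunction, and then normalize.

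First I would verify the formal properties of $\tilde W$. The right $J$-invariance is inherited from the $J$-invariance of $\phi_G^-$ (Proposition~\ref{fixvec}) via the $G(F)$-equivariance of $\mathcal{W}_{\psi}$ under right translation, and triviality of the central character is clear from the unramified central character of $St^{\chi}$. The Hecke equivariance transfers in the same way: since $\mathcal{H}(G(F),J)$ acts on both $(St^{\chi})^{J}$ and $\mathcal{W}(\pi_{St^{\chi}},\psi)^{J}$ by right convolution, one has
\[
X_g\cdot\tilde W = \mathcal{W}_{\psi}(X_g\cdot\phi_G^-) = \rho_{\chi}(X_g)\,\tilde W
\]
for every $g\in G(F)$, using that $\phi_G^-$ is a $\rho_{\chi}$-eigenvector.

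The step I expect to be the main obstacle is the non-vanishing $\tilde W\not\equiv 0$. Here I would argue that $St^{\chi}$ is generic, so the Whittaker functional is nonzero on the representation; by irreducibility $\phi_G^-$ is cyclic under right translation, hence its translates cannot all be annihilated by the Whittaker functional, which yields $\tilde W(g_0)\neq 0$ for some $g_0\in G(F)$. The Corollary following Theorem~\ref{main} then forces $\tilde W(1)\neq 0$, so rescaling $\phi_G^-$ so that $\tilde W(1)=1$ places us exactly under the hypotheses of Theorem~\ref{main}, whose conclusion is the formula claimed in Theorem~\ref{StWhit}.
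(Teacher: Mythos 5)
Your proposal is correct and follows essentially the same route as the paper, which justifies Theorem~\ref{StWhit} in a single sentence by combining the Hecke eigenproperty $(F*W)(g)=\rho_{\chi}(F)W(g)$ with the uniqueness from Theorem~\ref{main} and the non-vanishing recorded in the Corollary following it. Your version simply supplies more detail than the paper does, in particular the genericity-plus-cyclicity argument for $\tilde W\not\equiv 0$ and the explicit use of the Corollary to conclude $\tilde W(1)\neq 0$ before normalizing.
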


\begin{remark}
Using this explicit description of the Whittaker function associated to $v\in St^{\chi}$, we remark that $v$ is not fixed by any other parahoric subgroup of $G$ since $q^{-1}=W(s_{\alpha})\ne W(1)=1$.
\end{remark}

\section{Split Reductive Groups}
In this section we extend the results to (connected) split reductive groups without the condition of a connected center. Let thus $\mathcal{G}$ be a general split reductive group. Assuming our representation has trivial central character, it suffices to study representations of $G/Z$. We set $\mathcal{G}'=\mathcal{G}/\mathcal{Z}$ the adjoint group of $\mathcal{G}$ and $G'=\mathcal{G}'$. Then the short exact sequence of group valued functors $$1\to \mathcal{Z}\to\mathcal{G}\to\mathcal{G}'\to 1$$ 
gives rise to a short exact sequence of groups:
$$1\to Z\to G\to G'\to H^1(F,\mathcal{Z})$$ where $H^1(F,\mathcal{Z})$ is the first cohomology group of $\mathcal{Z}$. Since $\mathcal{Z}$ is a central algebraic group this implies that $H^1(F,\mathcal{Z})$ is finite abelian. We have just proved that: 
\begin{lemma} \label{lem:irred}
    The group $G_1=G/Z$ is a finite index, normal subgroup of $G'$. Furthermore $G'/G_1$ is abelian.
\end{lemma}
Using the above lemma we are now in the context of Lemma 2.1 of \cite{gel-knap} which states that:
\begin{lemma}
   Let $\pi$ be an irreducible, smooth representation of $G'$, then:
   \begin{enumerate}
       \item The restriction $\pi|_{G_1}$ is a finite direct sum of irreducible representations of $G_1$.
       \item Grouping by equivalence classes $$\pi|_{G_1}\simeq\sum\limits_{i=1}^Mm_i\pi_i$$ with $\pi_i\ne\pi_j$ and irreducible, we have that $m_i=m_j=m$ for all indices. 
       \item The number of continuous characters $\chi$ of $G'$ trivial on $G_1$ in the set $$\mathbb{X}_{G_1}(\pi)=\{\chi:G'/G_1\to\mathbb{C}^{\times}|\chi\pi\simeq\pi\}$$ is $m^2M$.
   \end{enumerate}
\end{lemma}
Using the same notation as before, we have that:
\begin{proposition}
The restriction $\pi_{St^{\chi}}|_{G_1}$ remains irreducible as a representation of $G_1$.
\end{proposition}
\begin{proof}
    From 3. of Lemma \ref{lem:irred}, it suffices to show that $\mathbb{X}_{G_1}(\pi_{St^{\chi}})=\{1\}$. Let $\xi$ be a character of $G'$ that is trivial on $G_1$. Since $\mathbf{W}\subseteq G_1$, this implies that $\xi^w=\xi$ and thus the character $\xi\chi\delta_B$ is regular and obviously $\xi\otimes I(\chi)\simeq I(\xi\chi)$. But now assuming $\xi\otimes St^{\chi}\simeq St^{\chi}$ and comparing the Jacquet modules of both sides leads to $\xi\chi\delta_B=\chi\delta_B \implies \xi\equiv 1$ completing the proof.
\end{proof}
\begin{proposition}
        The restriction of the Whittaker function of Theorem \ref{StWhit} to $G_1$ is the unique up to scalar multiplication Iwahori spherical Whittaker function of $G_1$ corresponding to $(St^{\chi})^J$.
\end{proposition}
\begin{remark}
Let $\omega$ be a character of $G$ and assume that $\omega_T$, its restriction to the torus $T$, is trivial on $T^0$. Then since $\omega\otimes I(\chi)\simeq I(\omega_T\chi)$, it follows that that the formula of Theorem \ref{StWhit} also applies to $I(\omega_T\chi)$, with $\omega_T\chi$ replacing $\chi$. The trivial central character condition is thus not entirely restrictive.
\end{remark}
\begin{remark}
    In the case of a split reductive group $G$ with connected center $\mathcal{Z}$, the trivial central character condition can be removed.
\end{remark}
From the above we finally obtain that:
\begin{theorem}
    For a connected split reductive group $G$, the unique up to scalar Iwahori fixed Whittaker function corresponding to the Steinberg representation with trivial central character is the function in Theorem \ref{StWhit}. 
\end{theorem}

\section{A Rankin-Selberg computation in $GL_n$}
Throughout this section we let $G_n=GL_n(F)$ and using our results from Theorem \ref{StWhit}, we compute certain Rankin-Selberg integrals associated to Steinberg representations. Let $\pi_n=\pi_{St^1}$ be the Steinberg representation of $G_n$ and $W_n$ its' corresponding Iwahori fixed Whittaker function with unramified character $\psi$. Pick a multiplicative Haar measure $d^{\times}g$ on $G_m$ normalized to be $1$ on the maximal compact subgroup $K_m$. Denote by $N_m$ the unipotent subgroup of upper triangular matrices of $G_m$, by $T_m$ the diagonal torus and by $J_m$ the Iwahori subgroup of $K_m$.
For $\pi_n$ and $\pi_m$ with $n\geq m$ and $W\in\mathcal{W}(\pi_n,\psi_n), W'\in\mathcal{W}(\pi_m,\psi_m^{-1})$ where $\psi_m=\psi_n|_{N_m}$, the zeta functions are defined as:
$$Z(s,W,W')=\int_{N_m\backslash G_m}W(\begin{pmatrix}
    g & 0 \\
    0 & I_{n-m}
\end{pmatrix})W'(g)|det(g)|^{s-\frac{n-m}{2}}d^{\times}g$$
For $s$ sufficiently large, this integral converges absolutely and, varying $W$ and $W'$, gives a fractional ideal of $\mathbb{C}[q,q^{-1}]$ generated by the L-function $L(s,\pi_n\boxtimes\pi_m)$. For applications it is sometimes useful to find $W$ and $W'$ that give the $L$-function itself. These functions are commonly referred to as "test vectors". We now explicitly show that the function we computed in Theorem \ref{StWhit} is a test vector for the Steinberg representation by computing $Z(s,W_n,W_m)$. 
We start with the case of $m=1$ and take $W_1$ to be an unramified character $\chi(.)=|.|^{\alpha}$ for some $\alpha\in\mathbb{C}$.
\begin{theorem}[Test Vector for the Steinberg Representation]

    We have that: $$Z(s,W_n,\chi)=\int_{F^{\times}}W_n(\begin{pmatrix}
    g & 0 \\
    0 & I_{n-1}
\end{pmatrix})\chi(g)|g|^{s-\frac{n-1}{2}}d^{\times}g=\frac{1}{1-\chi(\varpi)q^{-s-\frac{n-1}{2}}}=L(s,\pi_n\times\chi).$$ (see also section 0, relation (8) and section 8, relation (5) of \cite{JacShaPi}).
\end{theorem}
\begin{proof}
    From Theorem \ref{StWhit} it is clear that $W_n(\begin{pmatrix}
    g & 0 \\
    0 & I_{n-1}
\end{pmatrix})=|g|^{n-1}\mathbbm{1}_{\mathcal{O}}(g)$. Plugging this formula of $W_n$ into the integral gives $Z(s,W_n,\chi)=\int_{\mathcal{O}^{\times}}\chi(g)|g|^{s+\frac{n-1}{2}}d^{\times}g=\sum\limits_{\ell=0}^{\infty}\int_{|g|=q^{-\ell}}\chi(g)|g|^{s+\frac{n-1}{2}}d^{\times}g=\sum\limits_{\ell=0}^{\infty}\chi(\varpi)^{\ell}q^{-\ell(s+\frac{n-1}{2})}=\frac{1}{1-\chi(\varpi)q^{-s-\frac{n-1}{2}}}$.
\end{proof}

We now continue with the other cases for which we need the following lemma: 
\begin{lemma} \label{varchange}
    For $f\in Ind_{N_m}^{G_m}1$ (compact induction) we have that $$\int_{N_m\backslash G_m}f(g)d^{\times}g=\sum\limits_{w\in\mathbf{W}_m}q^{\ell(w)}\int_{T_m\times J_m}f(twh)\delta_{B_m}(t)^{-1}d^{\times}tdh$$.
\end{lemma}
\begin{proof}
    From the Iwasawa decomposition and the properties of $d^{\times}g$ we have that $\int_{N_m\backslash G_m}f(g)d^{\times}g=\int_{T_m\times K_m}f(tk)\delta_{B_m}(t)^{-1}d^{\times}tdk$. From the Bruhat decomposition $K_m=\bigsqcup\limits_{w\in\mathbf{W}_m}J_mwJ_m$ and Lemma \ref{cosrep} we get $\int_{T_m\times K_m}f(tk)d^{\times}tdk=\sum\limits_{w\in\mathbf{W}_m}\sum\limits_{s_{\alpha}\in(\mathcal{O}/\mathfrak{p})^{\ell(w)}}\int_{T_m\times J_m}f(t\prod\limits_{\underset{w^{-1}\alpha\in\Phi^-}{\alpha\in\Phi^+|}}x_{\alpha}(s_{\alpha})wj)\delta_{B_m}(t)^{-1}d^{\times}tdj$. The trivial observation $tNt^{-1}\subseteq N$ then implies that $$\sum\limits_{w\in\mathbf{W}_m}\sum\limits_{s_{\alpha}\in(\mathcal{O}/\mathfrak{p})^{\ell(w)}}\int_{T_m\times J_m}f(twj)\delta_{B_m}(t)^{-1}d^{\times}tdj=\sum\limits_{w\in\mathbf{W}_m}q^{\ell(w)}\int_{T_m\times J_m}f(twj)\delta_{B_m}(t)^{-1}d^{\times}tdj.$$
\end{proof}

\begin{theorem}[Test Vector for Steinberg Pairs] 
    For $n\geq m$ we have the following expression: 
    $$Z(s,W_n,W_m)=\frac{q^{(\frac{m(m-1)}{2}(s+\frac{m+n}{2})-\frac{(m-1)m(m+1)}{3})}}{\prod\limits_{i=1}^m(1-q^{-s-\frac{n+m}{2}+i})}=q^{(\frac{m(m-1)}{2}(s+\frac{m+n}{2})-\frac{(m-1)m(m+1)}{3})}L(s,\pi_n\times\pi_m).$$
\end{theorem}
\begin{proof}
 Applying the relation of Lemma \ref{varchange} to the function $f(g)=W_n(\begin{pmatrix}
    g & 0 \\
    0 & I_{n-m}
\end{pmatrix})W_m(g)$ with $g\in N_m\backslash G_m$, we obtain $Z(s,\pi_n\boxtimes\pi_m)=\int_{N_m\backslash G_m}W_n(\begin{pmatrix}
    g & 0 \\
    0 & I_{n-m}
\end{pmatrix})W_m(g)|det(g)|^{s-\frac{n-m}{2}}d^{\times}g=\sum\limits_{w\in\mathbf{W}_m}q^{\ell(w)}\int_{J_m}\int_{T_m}W_n(\begin{pmatrix}
    twh & 0 \\
    0 & I_{n-m}
\end{pmatrix})W_m(twh)|det(t)|^{s-\frac{n-m}{2}}\delta_{B_m}^{-1}(t)d^{\times}tdh$, which by the left $J_m$-invariance of $W_n$ (with the natural embedding) and $W_m$ becomes:  $$Z(s,W_n,W_m)=\sum\limits_{w\in\mathbf{W}_m}q^{\ell(w)}\int_{T_m}W_n(\begin{pmatrix}
    tw & 0 \\
    0 & I_{n-m}
\end{pmatrix})W_m(tw)|det(t)|^{s-\frac{n-m}{2}}\delta_{B_m}^{-1}(t)d^{\times}t.$$ But now we note that $W_n(\begin{pmatrix}
    tw & 0 \\
    0 & I_{n-m}
\end{pmatrix})=(-q)^{-\ell(w)}\delta_{B_n}(\begin{pmatrix}
    t & 0 \\
    0 & I_{n-m}
\end{pmatrix})=(-q)^{-{\ell(w)}}\delta_{B_m}(t)|det(t)|^{n-m}$ precisely for $t\in T_w^+$. Similarly $W_m(tw)=(-q)^{-\ell(w)}\delta_{B_m}(t)$ for $t\in T_w^+$. This means that $$Z(s,W_n,W_m)=\sum\limits_{w\in\mathbf{W}_m}q^{-\ell(w)}\int_{T_w^+}\delta_{B_m}(t)|det(t)|^{s+\frac{n-m}{2}}d^{\times}t.$$
Setting explicitly $t=diag(t_1,\dots,t_m)$ the left hand side becomes $$\sum\limits_{w\in\mathbf{W}_m}q^{-\ell(w)}\prod\limits_{i=1}^m\int_{|t_i|\leq|t_{i+1}|q^{k_i(w)}}|t_1|^{m-1}|t_2|^{m-3}\dots |t_m|^{1-m}|t_1\dots t_m|^{s+\frac{n+m}{2}}\prod\limits_{i=1}^md^{\times}t_i$$ where 
$k_i(w)=\begin{cases}
    1 \text{ , if } w^{-1}(i+1)<w^{-1}(i) \\
    0 \text{ , otherwise}
\end{cases}.$ Setting $x_i=t_i/t_{i+1}$ with $x_m=t_m$ we note that $\delta_{B_m}(t)=\prod\limits_{i=1}^m|x_i|^{i(m-i)}$ and $det|t_1\dots t_m|=\prod\limits_{i=1}^m|x_i|^i$, making the integral:
$$\sum\limits_{w\in\mathbf{W}_m}q^{-\ell(w)}\prod\limits_{i=1}^m\int_{|x_i|\leq q^{k_i(w)}}|x_i|^{i(m-i+s+\frac{n-m}{2})}d^{\times}x_i$$ and each term evaluates to $$\int_{|x_i|\leq q^{-k_i(w)}}|x_i|^{i(m-i+s+\frac{n+m}{2})}d^{\times}x_i=\sum\limits_{j=k_i(w)}^{\infty}q^{-ji(s-i+\frac{n-m}{2})}=\frac{q^{k_i(w)i(s-i+\frac{n-m}{2})}}{1-q^{-i(s-i+\frac{n-m}{2})}}$$ giving us the expression for the $Z$ function: 
$$Z(s,W_n,W_m)=\prod\limits_{i=1}^m(1-q^{-i(s-i+\frac{n-m}{2})})^{-1}\sum\limits_{w\in\mathbf{W}_m}q^{-\ell(w)}\prod\limits_{i=1}^{m-1}q^{k_i(w)i(s-i+\frac{n-m}{2})}.$$
To simplify this expression set $X=q^{s+\frac{n+m}{2}}$ to obtain the expression $$\sum\limits_{w\in\mathbf{W}_m}q^{-\ell(w)}\prod\limits_{i=1}^{m-1}X^{k_i(w)i}q^{-k_i(w)i^2}=\sum\limits_{w\in\mathbf{W}_m}q^{-\ell(w)}\prod\limits_{\underset{w^{-1}(i)>w^{-1}(i+1)}{i\in\{1,...,m-1\}:}}X^iq^{-i^2}$$ and notice that by using the standard combinatorial notations as in \cite{BrightSavage2010}:
\begin{itemize}
    \item $\ell(w)=inv(w)$ is the total number of inversions of $w\in S_m$
    \item $\prod\limits_{\underset{w^{-1}(i)>w^{-1}(i+1)}{i\in\{1,...,m-1\}:}}X^i=\prod\limits_{i\in Des(w)}X^i=X^{\sum\limits_{i\in Des(w)}i}=X^{maj(w)}$ and 
    \item $\prod_{i\in Des(w)}q^{-i^2}=q^{-sq(w)}$
\end{itemize}
the expression is exactly  $\sum\limits_{w\in\mathbf{W}_m}q^{-(inv(w)+sq(w))}X^{maj(w)}$, which by Theorem 4.4 of \cite{BrightSavage2010} is equal to:
$$\prod_{i=1}^m\frac{1-X^iq^{-i^2}}{1-Xq^{-i}}.$$
Substituting in the full expression yields that 
$$Z(s,W_m,W_m)=\frac{\prod_{i=1}^m\frac{(1-X^iq^{-i^2})}{(1-Xq^{-i})}}{\prod_{i=1}^m(1-X^{-i}q^{i^2})}=\frac{\prod\limits_{i=1}^mX^{i-1}q^{-i(i-1)}}{\prod\limits_{i=1}^m(1-X^{-1}q^i)}=\frac{X^{\frac{m(m-1)}{2}}q^{-\frac{(m-1)m(m+1)}{3}}}{\prod\limits_{i=1}^m(1-X^{-1}q^i)}.$$ Substituting the value of $X$ back into this expression proves the claim.
\end{proof}

\end{document}